\titleformat*{\section}{\large\bfseries}
\numberwithin{equation}{section}
\newtheorem{theorem}{Theorem}[section]
\newtheorem*{theorem*}{Theorem}
\theoremstyle{definition}
\newtheorem{definition}[theorem]{Definition}
\newtheorem{Ex}[theorem]{Example}
\newtheorem{Rem}[theorem]{Remark}
\renewcommand*{\@cite@ofmt}{\bfseries\hbox}
\newcommand{\oi}{\omega^i}
\newcommand{\owt}{\widetilde{\omega}}
\renewcommand{\o}{\omega}
\newcommand{\Jinf}{J^{\infty}}
\newcommand{\Ht}{\widetilde{\H}}
\renewcommand{\Xi}{X^i}
\newcommand{\uaJ}{u^\alpha_J}
\newcommand{\Uh}{\widehat{U}}
\newcommand{\UhaJ}{\widehat{U}^\alpha_J}
\newcommand{\ua}{u^\alpha}
\newcommand{\UhaJi}{\widehat{U}^\alpha_{J, i}}
\newcommand{\resone}{\restrictbig{\one}}
\newcommand{\Sb}{\bar{S}}
\renewcommand{\th}{^{\text{th}}}
\newcommand{\p}{\varphi}
\newcommand{\Nz}{\mathbb{N}_0}
\newcommand{\B}{\mathcal{B}}
\newcommand{\Bt}{\widetilde{\mathcal{B}}}
\newcommand{\E}{\mathcal{E}}
\newcommand{\D}{\mathcal{D}}
\newcommand{\X}{\mathcal{X}}
\newcommand{\U}{\mathcal{U}}
\newcommand{\G}{\mathcal{G}}
\renewcommand{\S}{\mathcal{S}}
\newcommand{\V}{\mathcal{V}}
\renewcommand{\H}{\mathcal{H}}
\newcommand{\C}{\mathcal{C}}
\newcommand{\R}{\mathbb{R}}
\newcommand{\dis}{\displaystyle}
\newcommand{\resx}{\restrict{x}}
\newcommand{\resz}{\restrict{z}}
\newcommand\one{\mathbbm{1}}
\newcommand{\bpm}{\begin{pmatrix}}
\newcommand{\epm}{\end{pmatrix}}
\newcommand{\bbm}{\begin{bmatrix}}
\newcommand{\ebm}{\end{bmatrix}}
\newcommand{\beq}{\begin{equation}}
\newcommand{\eeq}{\end{equation}}
\newcommand{\bex}{\begin{Ex}}
\newcommand{\eex}{\end{Ex}}
\newcommand{\bre}{\begin{Rem}}
\newcommand\av{^\alpha}
\def\restrict#1{\raise-.3ex\hbox{\scriptsize\ensuremath|}_{#1}}
\def\restrictbig#1{\raise-.3ex\hbox{\large\ensuremath|}_{#1}}
\def\irestrict#1{\raise+.7ex\hbox{\scriptsize\ensuremath|}^{#1}}
\def\comp{\raise 1pt \hbox{$\,\scriptstyle\circ\,$}}
\newcommand{\pder}[2][]{\frac{\partial#1}{\partial#2}}
\begin{document}
\begin{center}
\begin{Large}
Involutive Moving Frames II; The Lie-Tresse theorem\end{Large}
\vskip 1cm
\begin{tabular*}{1.0\textwidth}{@{\extracolsep{\fill}} ll}
\"Orn Arnaldsson \\
Department of Mathematics\\
University of Iceland\\
Reykjavík, Ssn.\ 600169-2039 \\
{\tt ornarnalds@hi\!.\!is} \\
\end{tabular*}
\end{center}

\vskip 0.25cm\noindent
{\bf Keywords}:   moving frames, equivalence problems,  differential invariants
\vskip 0.25cm\noindent
{\bf 2020 Mathematics subject classification}:  22F05, 58A15, 53A55

\vskip 0.5cm

 
\begin{abstract}
This paper continues the project, begun in \cite{IMF}, of harmonizing Cartan's classical equivalence method and the modern equivariant moving frame in a framework dubbed \emph{involutive moving frames}. As an attestation of the fruitfulness of our framework, we obtain a new, constructive and intuitive proof of the Lie-Tresse theorem (Fundamental basis theorem) and a first general upper bound on the minimal number of generating differential invariants for Lie pseudo-groups. Further, we demonstrate the computational advantages of this framework by studying the equivalence problem for first order PDE in two independent variables and one dependent variable under point transformations. 
\end{abstract}

\section{Introduction}
This paper continues the project, begun in \cite{IMF}, of relating Cartan's equivalence method to the equivariant moving frame for pseudo-groups. The major step was accomplished in \cite[Theorem 3.12]{IMF} where Cartan's classical solution to the congruence problem for submanifolds of finite dimensional Lie groups was extended to the groupoids associated to Lie pseudo-groups. Cartan's solution in the finite dimensional case underlies his classical method of moving frames as well as the modern equivariant moving frame, \cite{FO-1999} and its infinite dimensional analog harmonized the finite and infinite dimensional methods of equivalence. However, the key theorem \cite[Theorem 3.12]{IMF} was most directly applicable to so-called horizontal actions which left out some important equivalence problems, particularly those for partial differential equations. The current paper completes this picture by clarifying precisely which equivalence problems can be solved using some version of \cite[Theorem 3.12]{IMF} and how the theorem applies.

There are two major consequences of this, both of which indicate that our framework is fruitful for equivalence problems for Lie pseudo-groups. First, we obtain a new, constructive and intuitive proof of the Lie-Tresse theorem on the finiteness of generators of the differential invariant algebra of Lie pseudo-groups. Our proof is much simpler than previous modern proofs, \cite{KL-2006, KL-2016, OP-2009}, and, interestingly, makes critical use of the finite dimensional theory. Second, there are enormous computational advantages to the combination of the equivariant moving frame and Cartan's geometry. We study, in Section \ref{sec:1stpde}, the general equivalence problem of first order PDE
\[
u_y=f(x, y, u, u_x)
\]  
under point transformations and characterize those equations that are point-equivalent to $u_y=0$ and $u_y=u_x^2$. The branching found here are new. A forthcoming paper is dedicated to the computational advantages of our framework.

The paper's setup is as follows. In Section \ref{sec:LG} we give a quick review of Lie pseudo-groups and equivariant moving frames and refine the definition of a partial moving frame from \cite{IMF}. We explain how the key results from \cite{IMF} carry over to the class of ``vertically integrable'' pseudo groups, leading to the equivalence method in Theorem \ref{thm:total}. In the following section we nail down the precise condition on pseudo-groups for which Theorem \ref{thm:total} is most natural and we demonstrate this application by studying first order PDE under point-transformations. In the final section we give a new, constructive and intuitive proof of the Lie-Tresse theorem. This version of the theorem also gives general upper bounds on the minimal number of generating invariants and we consider a few examples of this bound. In the Appendix we consider the fundamental concept from \cite{OP-2009} of \emph{persistence of freeness} of Lie pseudo-groups. This property of pseudo-groups was proven in \cite{OP-2009} by rather complex means, but we show that it becomes essentially trivial in our framework.

\section{Lie pseudo-groups and moving frames}\label{sec:LG}

This section gives a rapid introduction to the formalism and constructions of the groupoid approach to Lie pseudo-groups, \cite{OP-2005}, and the equivariant moving frame, \cite{OP-2008}. The setup mirrors that of the first paper \cite{IMF}. 

\subsection{Pseudo-groups}
Let $\E\to\X$ be a smooth fiber bundle with fibers $\U$ and local coordinates $z=(x,u)$, $x\in\R^n$, $u\in\R^m$. Our results are mostly local and it will essentially suffice to consider $\E=\X\times\U\to\X$ where $\X\subset\R^n$ and $\U\subset\R^m$ are open sets. Regarding the local coordinates, we shall call the $x$-coordinates \emph{horizontal source coordinates} and the $u$-coordinates \emph{vertical source coordinates}. The \emph{diffeomorphism} pseudo-group on $\E$, $\D(\E)$ (or $\D$ when $\E$ is implied), is the collection of all (locally defined) invertible smooth maps, $\p$, between open sets in $\E$. We abuse notations slightly and write $\p:\E\to\E$ for these maps, even though they are only locally defined. This convention will hold throughout this paper and will apply to all locally defined objects.

The pseudo-group $q$-jets $j^q\p\resz$, $\p\in\D$, form a groupoid $\D_q$ fibered over $\E$ (for all $q\leq\infty$) with source and target maps (let $j^q\p\resz=(z, Z, \ldots, Z^a_A, \ldots)\in\D_q$)
\[
\sigma(j^q\p\resz)=z,\quad \tau(j^q\p\resz)=Z
\]
and groupoid operation $j^q\psi\restrict{Z}\cdot j^q\p\resz=j^q(\psi\comp\p)\resz$ where the composition is the standard composition operation on jets and the source of $j^q\psi\restrict{Z}$ is equal to the target of $j^q\p\resz$. Each $\psi\in\D$ acts on the groupoid elements $\D_q$ that have source within the domain of definition of $\psi$ by
\beq\label{eq:act}
R_\psi\cdot j^q\p\resz:=j^q\p\resz\cdot j^q\psi^{-1}\restrict{\psi(z)}.
\eeq
Note that $R_\psi$ agrees with the action $z\to\psi(z)$ on $\E$. For $\p\in\D$ we denote its jets by
\[
j^q\p\resz=(z, Z, \ldots, Z^a_A, \ldots)_{|A|\leq q},
\]
where $1\leq a\leq n+m$ and $A\in\Nz^{n+m}$ is a multi-index. We define the vector of jets $Z^{(q)}$ by $(z, Z^{(q)}):=(z, Z, \ldots, Z^a_A, \ldots)_{|A|\leq q}$.

\begin{Rem}
In the following, in lieu of explicit mention, the lower case Latin letters, $i,j,k,l$ will denote indices between $1$ and $n$, while lower case Greek letters, $\alpha, \beta$, etc. will denote indices between $1$ and $m$. The upper case Latin letters $J,K,L$ denote multi-indices in $\Nz^n$. The letters $a, b, c$ are reserved for superscripts in the jets $Z$ and run from $1$ to $m+n$ and their upper case counterparts $A,B,C$ denote multi-indices in $\Nz^{n+m}$.
\end{Rem}

The contact 1-forms on $\D_\infty$ (those 1-forms that are annihilated by the (pull-back of the) maps $z\mapsto j^\infty\p\resz$) have a basis that is invariant under the action (\ref{eq:act}). These forms are called the Maurer-Cartan forms of $\D$. They are denoted $\mu^a_A$ and the finite collections $\{\mu^a_A\}_{|A|<q}$ are a basis of the contact 1-forms on $\D_q$. Their structure equations are
\beq\label{eq:Dstr}
d\mu^a_A=\sum_{1\leq b\leq n+m}\o^b\wedge\mu^a_{A, b}+\sum_{\substack{B+C=A \\ |C|\geq1}}\sum_{1\leq b\leq n+m}\mu^a_{B, b}\wedge\mu^b_C,
\eeq
where $\o^b:=\sum_cZ^b_cdz^c$ are invariant forms on the base $\E$. See \cite{OP-2005}

Now let $\H$ be a sub Lie pseudo-group of $\D(\E)$ with sub groupoids $\H_p\subset\D_p$. This means that in local coordinates the elements of $\H$ are local solutions to a system of PDEs (called the \emph{determining, or defining, equations}),
\beq\label{eq:def}
F^{(t)}(z, Z^{(t)})=0 
\eeq
where each expression in $F^{(t)}$ is assumed to be genuinely $t\th$ order and full rank in the $t\th$ order jets. We assume this system is \emph{formally integrable} and real analytic and therefore locally solvable. Also, we assume that (\ref{eq:def}) is a complete system for $\H$, in the sense that
\[
\H_p=\left\{F^{(t)}=0~|~ t\leq p\right\}.
\]

\begin{Rem}\label{rem:order}
The \emph{order} of the Lie pseudo-group $\H$ is the smallest number $t^*$ such that all determining equations are differential consequences of the system $\left\{F^{(t)}=0~|~ t\leq t^*\right\}$. The important property of $t^*$ is that any local solution to $\left\{F^{(t)}=0~|~ t\leq t^*\right\}$ will define a pseudo-group element, as that solution will solve all the determining equations. 
\end{Rem}
%
%

 Linearizing (\ref{eq:def}) at the \emph{identity section} $\one^t\resz$ (the $t$-jet of the identity solution $z\mapsto z$) gives the linear system
\[
L^{(t)}(z, Z^{(t)})=\sum_{\substack{Z^a_A\\ 0\leq|A|\leq t}}\left(\frac{\partial F^{(t)}}{\partial Z^a_A}\restrictbig{\one^t\resz}\right)Z^a_A.
\]  
and restricting (or pulling back) the Maurer-Cartan forms $\mu^a_A$, to $\H_t$ will introduce linear dependencies among them precicely prescribed by these linear equations such that on $\H_p$ we have (see \cite{OP-2005})
\[
\sum_{\substack{Z^a_A\\ 0\leq|A|\leq t}}\left(\frac{\partial F^{(t)}}{\partial Z^a_A}\restrictbig{\one^t\resz}\right)_{z\leftrightarrow Z}\mu^a_A=0,
\] 
where, in the coefficients of $\mu^a_A$, we have replaced source coordinates $z$ by target coordinates $Z$.

By prolongation, $\H$ acts on the (submanifold) jet-bundles $J^q(\E)$, whose elements we denote by $j^q u\resx=(x, u,\ldots, u^\alpha_J,\ldots)$, $1\leq\alpha\leq m$ and $J\in\Nz^n$. Recall the total derivative on $J^\infty(\E)$, 
\[
D_i=\pder{x_i}+\sum_{\beta, K}u^\beta_{K,i}\pder{u^\beta_K},
\]
s.t. $u^\alpha_J=D^Ju^\alpha$. We denote the target coordinates of the prolonged action with capitalized, hat-wearing, letters:
\beq\label{eq:hatw}
j^\infty\p\resz\cdot j^\infty u\resx=(X, U, \ldots, \Uh^\alpha_J, \ldots),
\eeq
where $j^\infty\p\resz$ and  $j^\infty u\resx$ have the same source coordinate in $\E$, under $\H_\infty\to\E$ and $J^\infty(\E)\to\E$, respectively.

The pull-back of $\H_q\to\E$ by the canonical projection $J^q(\E)\to\E$, along with the prolonged action, gives the doubly fibered space
\[
\begin{tikzpicture}
  \node (A0) at (0,0) {$\Ht_q$};
  \node (A1) at (240:2cm) {$J^q(\E)$};
  \node (A2) at (300:2cm) {$J^q(\E)$,};
  \draw[->,font=\scriptsize]
  (A0) edge node[left] {$\tilde{\sigma}$} (A1)
  (A0) edge node[right] {$\tilde{\tau}$} (A2);

\end{tikzpicture}
\] 
where, in local coordinates,
\[
\tilde{\sigma}(j^q\p\resz, j^q u\resx)=j^qu \resx,\quad \tilde{\tau}(j^q\p\resz, j^q u\resx)=j^q\p\resz\cdot j^q u\resx.
\]
The pseudo-group $\H$ acts on the bundles $\Ht_q$ by \emph{right-regularization}, by
\beq\label{eq:rr}
\psi\cdot (j^q\p\resz, j^q u\resx):=(j^q\p\resz\cdot j^q\psi^{-1}\restrict{\p(z)}, j^q\psi\resz\cdot j^q u\resx)
\eeq
where the domain of definition of $\psi$ includes $z$. Importantly, the target map $\tilde{\tau}$ is invariant under this action and so the pull-back of any differential form of $J^q(\E)$ under $\tilde{\tau}$ is invariant on $\Ht_q$. In particular, the \emph{lifted invariants} are the pull-backs of the coordinate functions on $J^\infty(\E)$, $\tilde{\tau}(\uaJ)=\UhaJ$. The exterior derivative of lifted invariants is given by the fundamental \emph{recurrence formula},
\beq\label{eq:rec}
d\UhaJ=\sum_i^n\UhaJi\oi+\tilde{\tau}^*\left(\pder[\widehat{U}\av_J]{Z^a_A}\resone\right)\mu^a_A+\text{cont.}
\eeq
where ``cont.'' are contact forms on the bundle $\Jinf(\E)$ and automatically vanish when we restrict to a section of $\E$. Notice that, in the above formula, the first sum is only taken over the first $n$ base forms $\o^i$, $1\leq i\leq n$.

It often happens that, when constructing a moving frame (see below), we find a lifted invariant $\UhaJ$ that depends on the (submanifold) jets $J^q(\E)$ but on the pseudo-group parameters in $\H_p$, for $q>p$. To accommodate this feature, we define the following spaces. Let $q>p$ and consider the pull-back of $\H_p\to\E$ by $J^q(\E)\to\E$. Call this space $\Ht_{p,q}$. The canonical projection $\H_q\to\H_p$ induces a projection $\Ht_q\to\Ht_{p,q}$. The local coordinates of $\Ht_{p,q}$ are
\[
(j^p\p\resz, j^q u\resx)
\]
and the the pseudo-group action $\H$ on $\Ht_q$ reduces to an action of $\H$ on $\Ht_{p,q}$,
\beq\label{eq:rrpq}
\psi\cdot (j^p\p\resz, j^q u\resx):=(j^p\p\resz\cdot j^p\psi^{-1}\restrict{\p(z)}, j^q\psi\resz\cdot j^q u\resx).
\eeq

\subsection{Equivariant moving frames}

A \emph{partial moving frame} $\Bt_{p,q}$ is any smooth $\H$-invariant subspace of $\Ht_{p,q}$ under the right-regularized action (\ref{eq:rrpq}). We refer to the $p$ as the \emph{group order} of the partial moving frame and $q$ as its \emph{jet order}. The domain of definition of $\Bt_{p,q}$ is the projection 
\[
\S_q=\tilde{\sigma}\left(\Bt_{p,q}\right)\subset J^q(\E)
\]
where $\tilde{\sigma}$ is the source map in $\Ht_q$. Note that $\S_q$ is an invariant set. 

%
%

In practice, a partial moving frame is constructed as the level set of some of the lifted invariants $\UhaJ$, viewed as functions on some $\Ht_{p,q}\to\R$. The recurrence formula (\ref{eq:rec}) then tells us what linear dependencies among the Maurer-Cartan forms are introduced by restricting them to $\Bt_{p,q}$.


Let $\Bt_{p,q}\to\S_q$ be a partial moving frame with domain of definition $\S_q$. Ultimately we are interested in the local congruence of (local) sections of $\E\to\X$ under $\H$. For a section $S$ of $\E$ denote the jets of $S$ by $j^qS$ and assume $im(j^qS)\subset\S_q$. Pulling the bundle $\Bt_{p,q}\xrightarrow{\tilde{\sigma}}\S_q$ back by $j^qS$ will give a bundle\footnote{We continue to use the convention that functions and sections need only be defined on subsets of their domain space, and we extend this convention to bundles.} $\B^{j^qS}_p\to \X$. This is a subbundle of the pull-back bundle $S^*(\H_p\to\E)$ over $\X$.

\begin{Rem}\label{rem:emb}
Note that since $S:\X\to\E$ is one-to-one, $S^*(\H_p\to\E)$ and $\B^{j^qS}_p\to\X$ are subbundles of $\H_p\to\E$ under an embedding we denote $\Sb$.
\end{Rem}

The solution to the congruence problem for \emph{horizontal actions} in \cite{IMF} rested on results on the equivalence of sections of the bundle $S^*(\H_p\to\E)$. However, the horizontal case was made simple by the fact that $S^*(\H_p\to\E)$ was isomorphic to the groupoid $\G_p\to\X$ of a certain isomorphic Lie pseudo-group $\G\simeq\H$. (In the language of \cite{S-2000} $\H$ was a one-to-one prolongation of $\G$.) In the following we shall write $\H^S_p\to\X$ instead of $S^*(\H_p\to\E)$.

Unfortunately, in the non-horizontal case the bundle $\H^S_p\to\X$ is no longer isomorphic to the groupoid $\G_p\to\X$ of some known Lie pseudo-group as in \cite{IMF}. However, the proof of the key theorem \cite[Theorem 3.12]{IMF} carries over, word-for-word, to prove the following modification. A quick comment on notation: As in the \cite{IMF}, let $\H_p\irestrict{Z_0}\to\E$ denote the subbundle of $\H_p\to\E$ given by $\tau^{-1}(Z_0)$. This is the bundle we obtain when we normalize all zero order lifted invariants to $Z_0$ in the process of building a moving frame. Assuming we can normalize all zero order lifted invariants is not necessary but does simplify the discussion (see \cite{IMF}).

\begin{theorem}\label{GcoX}
Let $\H$ be a Lie pseudo-group, of order $t^*$, of transformations on the base manifold $\E$, and let $S_1$ and $S_2$ be sections of $\E$. Let $s_j$ be a section of $\H^{S_j}_p\irestrict{Z_0}\to \X$ for $j=1,2$, and $p\geq t^*$. A local transformation $f:\X\to \X$ preserves all the pulled-back Maurer-Cartan forms (recall Remark \ref{rem:emb} for the definition of $\Sb$),
\beq\label{eq:GcoX}
f^*s_2^*\Sb_2^*\mu^a_A=s_1^*\Sb_1^*\mu^a_A,\quad |A|<p,
\eeq
if and only if there exists a section $f_p$ of $\H^{S_1}_p\to \X$ such that
\[
f_p^*\Sb_1^*\mu^a_A=0,\quad |A|<p,
\]
and $R_{\Sb_1\comp f_p}\cdot(\Sb_1\comp s_1)=\Sb_2\comp s_2$. 
\end{theorem}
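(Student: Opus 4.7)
The plan is to carry Cartan's classical solution to the congruence problem for submanifolds of Lie groupoids over to the present setting, essentially by rerunning the proof of \cite[Theorem 3.12]{IMF} with the $\Sb_j$-embeddings in place of the groupoid isomorphism used there. The compositions $\Sb_j\comp s_j:\X\to\H_p$ realize $\X$ as an $n$-dimensional submanifold of the jet groupoid, cut out over the graph of $S_j$ and further sliced by the normalizations encoded in the partial moving frame. The Maurer-Cartan forms $\mu^a_A$ are right-invariant on $\H_p$ by construction, and their pull-backs $s_j^*\Sb_j^*\mu^a_A$ are the intrinsic data on $\X$ which the theorem asserts classify these submanifolds up to right-translation by a pseudo-group section.

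For the $(\Leftarrow)$ direction, assume $f_p$ is given. Since $p\geq t^*$, the vanishing $f_p^*\Sb_1^*\mu^a_A=0$ for $|A|<p$ is precisely the linearized form of the determining equations, so by Remark \ref{rem:order} the section $\Sb_1\comp f_p$ is the $p$-jet along $S_1$ of a genuine local pseudo-group element $\p\in\H$. The right-invariance $R_\p^*\mu^a_A=\mu^a_A$, combined with the relation $R_{\Sb_1\comp f_p}\cdot(\Sb_1\comp s_1)=\Sb_2\comp s_2$, then immediately yields $f^*s_2^*\Sb_2^*\mu^a_A=s_1^*\Sb_1^*\mu^a_A$, where $f$ is the base diffeomorphism induced on $\X$ by $\p$.

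For the $(\Rightarrow)$ direction, construct $f_p$ pointwise using the groupoid operation. For each $x\in\X$, the elements $\Sb_1(s_1(x))$ and $\Sb_2(s_2(x))$ both lie in the target-fiber $\tau^{-1}(Z_0)$ of $\H_p$, so by transitivity of right-translation along target fibers there is a unique $p$-jet $f_p(x)\in\H^{S_1}_p$ with $R_{\Sb_1\comp f_p(x)}\cdot\Sb_1(s_1(x))=\Sb_2(s_2(x))$. Smoothness of $f_p$ in $x$ follows from smoothness of $s_j$ and $\Sb_j$ together with smoothness of the groupoid multiplication and inversion.

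The main obstacle is verifying that the constructed $f_p$ satisfies $f_p^*\Sb_1^*\mu^a_A=0$ for $|A|<p$, which by Remark \ref{rem:order} will then promote $\Sb_1\comp f_p$ to the $p$-jet of an actual pseudo-group element covering $f$. The argument is the standard Cartan maneuver: right-invariance of the Maurer-Cartan basis translates the defining relation of $f_p$ into an identity expressing the pull-back of $\mu^a_A$ along $\Sb_2\comp s_2$ as the sum of the pull-back along $\Sb_1\comp s_1$ (precomposed with $f$) and a piece involving $f_p^*\Sb_1^*\mu^a_A$; the hypothesis $f^*s_2^*\Sb_2^*\mu^a_A=s_1^*\Sb_1^*\mu^a_A$ then forces the remaining piece to vanish. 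The only substantive departure from the horizontal case of \cite{IMF} is that $\H^{S_1}_p\to\X$ is no longer itself a Lie groupoid, but since every step takes place inside the ambient groupoid $\H_p\to\E$ via the embedding $\Sb_1$, the original proof transfers without modification.
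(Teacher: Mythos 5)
Your overall strategy coincides with the paper's: the paper offers no standalone proof of Theorem \ref{GcoX}, stating instead that the proof of \cite[Theorem 3.12]{IMF} carries over word-for-word once the embeddings $\Sb_j$ replace the groupoid identification available in the horizontal case, and your pointwise construction of $f_p$ via the groupoid operation plus the ``Cartan maneuver'' for showing $f_p^*\Sb_1^*\mu^a_A=0$ is exactly that transfer.

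There is, however, one genuine error, and it is load-bearing in your $(\Leftarrow)$ direction. You claim twice that, because $p\geq t^*$ and $f_p^*\Sb_1^*\mu^a_A=0$ for $|A|<p$, Remark \ref{rem:order} promotes $\Sb_1\comp f_p$ to the $p$-jet of a genuine pseudo-group element $\p\in\H$, and you then deduce $(\Leftarrow)$ from the right-invariance $R_\p^*\mu^a_A=\mu^a_A$. Remark \ref{rem:order} applies to solutions of the determining equations on an \emph{open} subset of $\E$; the map $\Sb_1\comp f_p$ is data only along the $n$-dimensional image of $S_1$ inside the $(n+m)$-dimensional $\E$, and annihilating the pulled-back Maurer--Cartan forms there supplies only the compatibility conditions in the directions tangent to $S_1$. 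Whether such a section can be integrated in the remaining vertical directions is precisely the content of the subsequent definitions of vertical integrability and quasi-horizontality (settled via Cauchy--Kovalevskaya in Theorem \ref{thm:quastotal}); if your inference were valid, every Lie pseudo-group of order $t^*$ would be vertically integrable of order $t^*$ and Theorems \ref{GcoX2} and \ref{thm:quastotal} would be vacuous. The fix is to argue $(\Leftarrow)$ at the level of jets, never invoking an actual $\p$: the same groupoid-multiplication identity you use in $(\Rightarrow)$ expresses the pull-back of $\mu^a_A$ along $R_{\Sb_1\comp f_p}\cdot(\Sb_1\comp s_1)$ as $s_1^*\Sb_1^*\mu^a_A$ plus terms with invertible coefficients in $f_p^*\Sb_1^*\mu^a_B$, $|B|\leq|A|$, and the hypothesis $f_p^*\Sb_1^*\mu^a_B=0$ kills the correction terms. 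With that substitution your argument matches the intended proof.
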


\begin{Rem}
Notice that the above theorem is only true when $p$ is at least as great as the order of the Lie pseudo-group $\H$. This is because the proof relies on local solvability in the groupoid $\H_p$, i.e. for each point solution to
\[
F^{(t)}=0, \quad t\leq p
\]
there must be a pseudo-group element passing through that point. See \cite{IMF} for details. 
\end{Rem}

Notice that $\Sb_1\comp f_p$ annihilates the contact codistribution of $\H_p$ of which the Maurer-Cartan forms $\mu^a_A$ are a basis and so there is hope that $\Sb_1\comp f_p$ can be \emph{integrated} to a bona-fide pseudo-group element of $\H$. That is, $\Sb_1\comp f_p$ could be given as an initial condition for the system of determining equations of $\H$. We make a general definition of the pseudo-groups for which this is possible, but hold off on characterizing convenient subclasses of these further until Section \ref{sec:qhpg} below. 

\begin{definition}
Let $\H$ be a Lie pseudo-group, of order $t^*$, of transformations on the bundle $\E\to\X$. For $r\geq t^*$, the pseudo-group $\H$ is called \emph{$r\th$ order vertically integrable} if, for each section $S$ of $\E$, and each $p\geq r$, every map $f_p:\X\to\H^S_p$ such that $\Sb\comp f_p$ annihilates all Maurer-Cartan forms $\mu^a_A$, $|A|<p$ on $\H_p\to\E$ and each point $f_p(x)$ on the image of $f_p$, there exists a neighborhood $V$ of $S(x)\in\E$  and a pseudo-group element $\p\in\H$ defined on $V$ such that $j^p\p\comp S=\Sb\comp f_p$ on $S^{-1}(V)$, and $r$ is the smallest such number.
\end{definition}  

For vertically-integrable pseudo-groups we have the following consequence of the previous theorem.

\begin{theorem}\label{GcoX2}
Let $\H$ be a $r\th$ order vertically integrable Lie pseudo-group of transformations on $\E\to\X$, and let $S_1$ and $S_2$ be sections of $\E$. Let $s_j$ be a section of $\H^{S_j}_p\irestrict{Z_0}\to \X$ for $j=1,2$ and $p\geq r$. A local transformation $f:\X\to \X$ preserves all the pulled-back Maurer-Cartan forms,
\beq\label{eq:GcoX}
f^*s_2^*\Sb_2^*\mu^a_A=s_1^*\Sb_1\mu^a_A,\quad |A|<p,
\eeq
if and only if there is a $\p\in\H$ such that $\p\comp S_1=S_2$.
\end{theorem}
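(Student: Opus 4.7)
The plan is to bootstrap Theorem~\ref{GcoX} via the vertical integrability hypothesis: the auxiliary output $f_p$ of Theorem~\ref{GcoX} is to be promoted to a genuine pseudo-group element $\p\in\H$, and conversely.

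\textbf{Forward direction.} Suppose $f$ satisfies the Maurer-Cartan identity. Since $p\geq r\geq t^*$, Theorem~\ref{GcoX} applies and delivers a section $f_p:\X\to\H^{S_1}_p$ with
\[
f_p^*\Sb_1^*\mu^a_A=0,\ |A|<p,
\qquad
R_{\Sb_1\comp f_p}\cdot(\Sb_1\comp s_1)=\Sb_2\comp s_2.
\]
The first condition says exactly that the $\H_p$-valued map $\Sb_1\comp f_p$ annihilates every Maurer-Cartan form; this is the verbatim hypothesis of $r$-th order vertical integrability. Since $p\geq r$, the definition supplies a pseudo-group element $\p\in\H$, defined on a neighborhood $V$ of $S_1(x)\in\E$, with $j^p\p\comp S_1=\Sb_1\comp f_p$ on $S_1^{-1}(V)$. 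To identify $\p\comp S_1$ with $S_2$ I use the groupoid identity. Both $\Sb_j\comp s_j(x)$ have target $Z_0$ in $\H_p$ (by the normalization $s_j\in\H^{S_j}_p\irestrict{Z_0}$), so the source of
\[
R_{\Sb_1\comp f_p(x)}\cdot(\Sb_1\comp s_1(x))=\Sb_1\comp s_1(x)\cdot(\Sb_1\comp f_p(x))^{-1}
\]
equals the target of $\Sb_1\comp f_p(x)$, which is $\p(S_1(x))$. Equating with the source $S_2(x)$ of $\Sb_2\comp s_2(x)$ forces $\p(S_1(x))=S_2(x)$, which is the desired identity.

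\textbf{Converse direction.} Given $\p\in\H$ with $\p\comp S_1=S_2$, I build $f_p$ satisfying the hypotheses of Theorem~\ref{GcoX} and then invoke that theorem. The natural choice is to let $f_p(x)\in\H^{S_1}_p$ correspond to $j^p\p\restrict{S_1(x)}\in\H_p$, which is consistent since $j^p\p\restrict{S_1(x)}$ has source $S_1(x)$. Condition (a), $f_p^*\Sb_1^*\mu^a_A=0$, is then automatic, since the $p$-jet of a pseudo-group element annihilates all Maurer-Cartan forms. For condition (b), both sides of the required groupoid identity lie in the fiber of $\H_p$ over source $S_2(x)$ and target $Z_0$; matching the moving-frame sections $s_1,s_2$ under the correspondence induced by $\p$ gives the equality, possibly after absorbing a section of the isotropy groupoid of $S_2$ into the base diffeomorphism $f$ that Theorem~\ref{GcoX} ultimately produces.

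The main obstacle is the forward direction: recognising the first output of Theorem~\ref{GcoX} as precisely the hypothesis of $r$-th order vertical integrability. Once this identification is seen, the proof reduces to a single invocation of the vertical integrability definition combined with short source/target bookkeeping in the groupoid $\H_p$. The converse direction is more mechanical; the delicate point is the fine matching of the moving frames $s_1$ and $s_2$ under $\p$, which is handled by exploiting the flexibility in the base diffeomorphism $f$.
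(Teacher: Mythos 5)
Your proposal is correct and follows essentially the same route as the paper, which presents Theorem \ref{GcoX2} as an immediate consequence of Theorem \ref{GcoX}: the output $f_p$ is recognised verbatim as satisfying the hypothesis of $r$th order vertical integrability, is integrated to $\p\in\H$, and the groupoid identity $R_{\Sb_1\comp f_p}\cdot(\Sb_1\comp s_1)=\Sb_2\comp s_2$ identifies the target of $f_p$ with $S_2$ (up to the base map $f$, so strictly $\p\comp S_1=S_2\comp f$, a point your bookkeeping elides but which is immaterial for congruence of the sections as submanifolds). The converse is likewise the intended reversal of Theorem \ref{GcoX}, and your remark about adjusting $f$ to match the chosen sections is consistent with the quantifier structure the paper later makes explicit in Theorem \ref{thm:total}.
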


Due to Theorem \ref{GcoX2} the involutive moving frame \cite{IMF} completely carries over to these pseudo-groups, including the following fundamental theorem on congruence in $\E$ under $\H$, which has, verbatim, the same proof as in the horizontal case, \cite[Theorem 4.15]{IMF}.

\begin{theorem}\label{thm:total}
Let $S_1$ and $S_2$ be sections of $\E$ and let $\H$ be an $r\th$ order vertically integrable Lie pseudo-group of transformations on $\E$. Let $\B_{p,q}\to\S_q$ be a partial moving frame with $p\geq r$, on which $U=\tilde{\tau}^*(u)=\text{constant}$. Then the following are equivalent.
\begin{enumerate}
\item $S_1$ and $S_2$ are locally congruent under some $\p\in\H$.
\item For each section $s_1$ of $\B^{j^qS_1}_p\to \X$ there is a section $s_2$ of $\B^{j^qS_2}_p\to \X$  and a map $f$ such that
\[
f^*s_2^*\Sb_2^*\mu^a_A=s_1^*\Sb_1\mu^a_A,\quad |A|<p.
\]
\item There exists a pair of sections, $s_j$ of $\B^{j^qS_j}_{p}\to \X$, $j=1,2$, and a map $f$ such that
\[
f^*s_2^*\Sb_2^*\mu^a_A=s_1^*\Sb_1\mu^a_A,\quad |A|<p.
\]
\end{enumerate}
\end{theorem}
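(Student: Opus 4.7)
The plan is to prove the cyclic chain of implications (1) $\Rightarrow$ (2) $\Rightarrow$ (3) $\Rightarrow$ (1), mirroring the scheme of \cite[Theorem 4.15]{IMF} but with Theorem \ref{GcoX2} substituted at the decisive step. The implication (2) $\Rightarrow$ (3) is tautological—any one choice of $s_1$ produces a witness for (3)—so no work is needed there.

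For (3) $\Rightarrow$ (1), I would invoke Theorem \ref{GcoX2} directly. Two things have to be checked before applying it: first, that the sections $s_j$ of $\B^{j^qS_j}_p \to \X$ are in fact sections of $\H^{S_j}_p\irestrict{Z_0}\to\X$, which is exactly what the normalization $U = \tilde{\tau}^*(u) = \text{constant}$ built into $\B_{p,q}$ supplies; second, that the hypothesis $p \geq r$ is in force, so that vertical integrability may be applied. With these in hand, the Maurer-Cartan identity assumed in (3) is verbatim the hypothesis of Theorem \ref{GcoX2}, whose conclusion is the existence of a $\p \in \H$ with $\p \comp S_1 = S_2$.

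The substantive implication is (1) $\Rightarrow$ (2). Assume $\p \in \H$ realizes the local congruence, i.e.\ $\p \comp S_1 = S_2 \comp f$ for some local diffeomorphism $f:\X \to \X$. Given an arbitrary section $s_1$ of $\B^{j^qS_1}_p \to \X$, I would transport it along $\p$ using the right-regularized action (\ref{eq:rrpq}): define $s_2(y) := R_\p \cdot s_1(f^{-1}(y))$. Since by definition $\B_{p,q} \subset \Ht_{p,q}$ is $\H$-invariant under (\ref{eq:rrpq}), and $\p$ carries $S_1$ to $S_2 \comp f$, the image of $s_2$ lies inside $\B^{j^qS_2}_p$. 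The required identity $f^* s_2^* \Sb_2^* \mu^a_A = s_1^* \Sb_1^* \mu^a_A$ then follows at once from the right-invariance of the Maurer-Cartan basis $\{\mu^a_A\}$ on $\H_p$ under the action (\ref{eq:act}), applied with $\psi = \p$.

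The main obstacle here is conceptual rather than computational: the whole point of the theorem is to recycle the horizontal argument of \cite[Theorem 4.15]{IMF} essentially without change, and the nontrivial verification is that Theorem \ref{GcoX2}—which required the added hypothesis of $r\th$ order vertical integrability, plus $p \geq r$—can indeed be used in place of \cite[Theorem 3.12]{IMF} at the one place the latter was invoked in the horizontal proof. Once that substitution is seen to be legitimate, the three implications go through verbatim, with no further analytic input required beyond what has already been assembled in the preceding results of this section.
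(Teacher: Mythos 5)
Your proposal is correct and follows exactly the route the paper intends: the paper gives no written proof here but states that the result ``has, verbatim, the same proof as in the horizontal case, \cite[Theorem 4.15]{IMF}," with Theorem \ref{GcoX2} replacing the key theorem of \cite{IMF} — which is precisely the cyclic chain (1)$\Rightarrow$(2)$\Rightarrow$(3)$\Rightarrow$(1) you reconstruct, with the trivial step, the transport of $s_1$ by the right-regularized action and Maurer--Cartan invariance for (1)$\Rightarrow$(2), and Theorem \ref{GcoX2} for (3)$\Rightarrow$(1). No gaps; your identification of the normalization $U=\tilde{\tau}^*(u)=\text{constant}$ and the hypothesis $p\geq r$ as the points to check before invoking Theorem \ref{GcoX2} matches the paper's setup.
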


This theorem presents us with two problems. First we must solve the coframe equivalence problem in part 3 of Theorem \ref{thm:total} to obtain a function $f:\X\to\X$ and its corresponding $f_p$ of Theorem \ref{GcoX}. Then we must integrate $f$ and $f_p$ to a pseudo-group element $\p\in\H$. In this paper we shall focus on those vertically integrable pseudo-groups where the section $f_p$ can be \emph{uniquely} integrated to a pseudo-group element $\p\in\H$. We shall call these pseudo-groups \emph{quasi-horizontal} and we now turn to characterizing them in terms of their determining equations. Note that for these quasi-horizontal pseudo-groups it is enough to find the $f$ of part 3 of Theorem \ref{thm:total} in order to completely solve the equivalence problem. Hence, the involutive moving frame method from \cite{IMF} will completely carry over to the quasi-horizontal case. 


\subsection{Quasi-horizontal Lie pseudo groups}\label{sec:qhpg}

We now turn to the characterization of Lie pseudo-groups for which the sections $f_p$ of Theorem \ref{GcoX} can be uniquely integrated to pseudo-group elements. 

%

\begin{definition}\label{def:quasi}
Let $\H$ be a Lie pseudo-group, of order $t^*$, of transformations on $\E$ with determining equations $F^{(t)}(t, Z^{(t)})=0$. Given local coordinates $z=(x,u)$ on $\E$, the elements of $\H$ are local diffeomorphisms $(x,u)=z\mapsto Z=(X,U)$, on $\E$. Then $\H$ is called \emph{quasi-horizontal} if there exists an $r\geq t^*$ such that the following holds.

In the determining equations of $\H$, each derivative of order $r$, $Z^a_A$, $|A|=r$, which has at least one $\ua$ derivative can be taken as principal. In other words, for each $A\in\mathbb{Z}_{\geq0}^{r-1}$, $1\leq a\leq n+m$ and $1\leq\alpha\leq m$, one of the determining equations $F^{(r)}=0$ for $\H$ reads
\[
\frac{\partial^{r}Z^a}{\partial\ua\partial z^A}=F^{a}_{\alpha, A}(z, Z^{(r)}).
\]

Notice that if the two conditions hold for some $r$, they also hold for all $r'\geq r$. We say $\H$ is \emph{quasi-horizontal of order} $r$ if $r$ is the smallest number for which the above holds. 
\end{definition}


\begin{Rem}\label{rem:quasi}
Importantly, if $\H$ is quasi-horizontal in variables $(x, u)$ it is also quasi-horizontal (with the same horizontal order) after any fiber preserving change of variables in $\E$, $(x, u)\mapsto (y(x), w(x, u))$. The above definition is therefore consistent on a general bundle $\E\to\X$.  
\end{Rem}

\begin{Rem}
Horizontal pseudo-groups, as defined in \cite{IMF}, are trivially quasi-horizontal.
\end{Rem}

 
Now let $f:\X\to\X$ and $f_p:\X\to\H^{S_1}_p$ satisfy the conditions of Theorems \ref{GcoX} and \ref{thm:total} where $\H$ is quasi horizontal (of order $r\leq p$) and assume the images of $S_j$ are the graphs of two functions $u_j:\X\to\U$, $j=1,2$. We can straighten $S_1$ out in the $x$-direction by a (fiber-preserving) change of variables,
\[
(y,w)\mapsto (y, u_1(y)+w).
\]
We call the coordinates $(y, w)$ \emph{flat} for $S_1$, and by Remark \ref{rem:quasi} $\H$ is also quasi-horizontal in $(y, w)$. Notice that on $S_1$ we have
\beq\label{eq:dy}
\pder{y}=\pder{x}+u_x\pder{u}.
\eeq
In the flat coordinates $S_1$ is the zero section $y\to (y, 0)$ and we shall treat $\Sb_1\comp f_p(y, 0)$ as initial conditions for a sequence of normal systems of partial differential equations for the vector 
\[
\pmb{Z}:=Z^{(p-1)}=(Z^1, \ldots, Z^a_{A},\ldots)\restrict{|A|\leq p-1},
\] 
where $A=(A_1, \ldots, A_{n+m})\in\Nz^{n+m}$ now denote derivatives w.r.t. $(y, w)$. The $p\th$ order determining equations for $\H$ involve some equations that only contain $y$-derivatives of $\pmb{Z}$, but these equations are satisfied by $\Sb_1\comp f_p(y, 0)$ by construction. The rest of the $p\th$ order equations involve at least one $w$-derivative. Since $\H$ is quasi-horizontal, using the standard trick of turning a PDE of arbitrary order into a first order system, we can write these latter equations as
\beq\label{eq:Zb}
\pder{w^1}\pmb{Z}=\pmb{F}_1(y, w, \pmb{Z},\pder{y_1}\pmb{Z},\ldots, \pder{y_n}\pmb{Z}) 
\eeq
for some function $\pmb{F}_1$.

Now, given the map $\Sb_1\comp f_p(y, 0)$ consider the initial value problem (\ref{eq:Zb}) with initial value $\pmb{Z}(y, 0)=\Sb_1\comp f_p(y,0)$. This is obviously a normal system and uniquely integrable to an open set in $\R^{n+1}\times\{0\}^{m-1}$ by the Cauchy-Kovalevskaya theorem. We can then use the newly integrated $\pmb{Z}(y, w^1, 0)$ as an initial condition for a system
\[
\pder{w^2}\pmb{Z}=\pmb{F}_2(y, w, \pmb{Z},\pder{y_1}\pmb{Z}, \pder{y_n}\pmb{Z}) 
\]
and so on until we have integrated $\pmb{Z}$ to the prolongation of a pseudo-group element $\p\in\H$. This proves the following.

\begin{theorem}\label{thm:quastotal}
Let $\H$ be quasi horizontal of order $r$ and let $f$ and $f_p$ satisfy the conditions of Theorems \ref{GcoX} and \ref{thm:total} with $p\geq r$. Then $f$ can be uniquely integrated to a pseudo-group element $\p\in\H$. 
\end{theorem}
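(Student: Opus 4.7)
My plan is to follow the strategy already hinted at in the paragraphs leading to the statement: first normalize $S_1$ to make it the zero section so that the values of $\Sb_1\comp f_p$ serve as initial data on $\{w=0\}$, then iteratively apply the Cauchy--Kovalevskaya theorem, one vertical direction at a time, exploiting the quasi-horizontal structure of the determining equations.

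First I would choose flat coordinates $(y,w)$ for $S_1$ via the fiber-preserving change of variables $(y,w)\mapsto (y,u_1(y)+w)$, so that $S_1$ is the zero section. By Remark \ref{rem:quasi}, $\H$ remains quasi-horizontal of the same order $r$ in these coordinates, and by construction the image of $\Sb_1\comp f_p$ lies over $\{w=0\}$. Since $f_p^*\Sb_1^*\mu^a_A=0$ by Theorem \ref{GcoX}, the section $\Sb_1\comp f_p$ annihilates the contact codistribution on $\H_p$, which is exactly the statement that the assembled vector $\pmb Z(y,0)=\Sb_1\comp f_p(y,0)$ satisfies every determining equation whose left-hand side involves only $y$-derivatives.

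Next I would set up the cascade. The remaining determining equations of order $p$ each contain at least one $w$-derivative, so the quasi-horizontal assumption lets me solve them for a top $w^\alpha$-derivative and, after the standard reduction of an order-$p$ system to first order in $w^\alpha$, rewrite them as normal systems
\[
\pder{w^\alpha}\pmb Z=\pmb F_\alpha\bigl(y,w,\pmb Z,\partial_{y_1}\pmb Z,\ldots,\partial_{y_n}\pmb Z\bigr),\qquad \alpha=1,\ldots,m.
\]
I would integrate these in turn: first apply Cauchy--Kovalevskaya (in its real-analytic form, which is available since $\H$ is real analytic) to the $\alpha=1$ system with initial data $\pmb Z(y,0)=\Sb_1\comp f_p(y,0)$, obtaining a unique analytic extension on a neighborhood of $\X\times\{0\}^{m}$ in $\X\times\R\times\{0\}^{m-1}$. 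I would then restrict this extension to $\{w^2=0\}$ and use it as initial data for the $\alpha=2$ system, and so on. At each stage uniqueness follows from the Cauchy--Kovalevskaya theorem, so the final $\pmb Z(y,w)$ is uniquely determined by $\Sb_1\comp f_p$.

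The one genuine difficulty is verifying that the object I construct is actually a pseudo-group element, not merely a solution of the principal subsystem chosen in Definition \ref{def:quasi}. I would handle this in two steps. First, the equations with only $y$-derivatives continue to hold on the whole image of $\pmb Z$ because their $w^\alpha$-derivatives are differential consequences of the normal systems (the defining system is formally integrable by assumption), and they hold on $\{w=0\}$, so Cauchy uniqueness propagates them. Second, since $p\geq r\geq t^*$, any point solution of the order-$p$ determining equations lies on a genuine pseudo-group jet (Remark \ref{rem:order}), and the integrability of the full system guarantees that the solution I built is $j^p\p$ for a unique $\p\in\H$ defined near $S_1(x)$. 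Since the source component of this $\p$ restricted to $S_1$ agrees with $f$, this produces the desired unique integration of $f$.
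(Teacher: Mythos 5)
Your proposal follows essentially the same route as the paper: pass to flat coordinates $(y,w)$ for $S_1$, observe that the pure $y$-derivative determining equations are already satisfied by $\Sb_1\comp f_p(y,0)$, use quasi-horizontality to put the remaining equations into normal form in each $w^\alpha$ direction, and integrate by an iterated application of the Cauchy--Kovalevskaya theorem. Your final paragraph, checking that the cascade actually produces a genuine pseudo-group element (propagation of the $y$-only equations via formal integrability, plus the appeal to $p\geq r\geq t^*$ and Remark \ref{rem:order}), makes explicit a step the paper leaves implicit, but does not change the argument.
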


Theorem \ref{thm:quastotal} allows us to transfer the equivalence method for horizontal actions from \cite{IMF} to the quasi horizontal case as outlined in the comments following Theorem \ref{thm:total}. We now demonstrate the use of Theorem \ref{thm:total} by studying the equivalence problem of first order partial differential equations.

\subsection{Equivalence of first order PDE}\label{sec:1stpde}

Consider a general first order partial differential equation for a scalar function $u(x, y)$ of two variables,
\[
u_y=f(x, y, u, u_x).
\]
We shall study the equivalence of such equations under the pseudo-group, $\G$, of \emph{point transformations}. 

\begin{Rem}
We shall not go into too much detail regarding the actual, nitty-gritty, computations in this example. A forthcoming paper is dedicated to the computational advantages of the involutive moving frame.
\end{Rem}

Writing $u_x=p$ and $u_y=q$, the elements of the pseudo-group are local diffeomorphisms
\[
(x, y, u, p, q)\overset{\p}{\mapsto} (X, Y, U, P, Q).
\]
Since the above is the first prolongation of $(x, y, u)\mapsto(X, Y, U)$, we have $X_p=X_q=Y_p=Y_q=U_p=U_q=0$ and
\beq\label{eq:pqmat}
\bbm P \\ Q \ebm=\bbm X_x+pX_u & X_y+qX_y \\ Y_x+pY_u & Y_y+qY_u \ebm^{-T}\bbm U_x+pU_u \\ U_y + qU_u\ebm.
\eeq
These are the basic first order equations for $\G$ from which all other defining equations are found, by prolongation and projection. Note that differentiating both sides of (\ref{eq:pqmat}) w.r.t. $q$ will give determining equations $P_q=\cdots$ and $Q_q=\cdots$ where the right hand sides are first order, and we easily determine that this Lie pseudo-group is quasi horizontal of horizontal order 1. 

Since the entries of the matrix above appear so frequently throughout we denote them by
\[
\bbm g_{11} & g_{12} \\ g_{21} & g_{22} \ebm:=\bbm X_x+pX_u & X_y+qX_y \\ Y_x+pY_u & Y_y+qY_u \ebm.
\]  
We normalize $(X, Y, U, P, Q)$ to zero so that
\[
U_x=-pU_u,\quad U_y=-qU_u
\]
and the recurrence formula gives $\mu^u_X=-\o^p$ and $\mu^u_Y=-\sum \widehat{Q}_i\oi$. The recurrence formula for the first order lifted invariants $\widehat{Q}_i$ gives
\[
\aligned
d\widehat{Q}_X&=\sum \widehat{Q}_{Xi}\oi +\o^p \widehat{Q}_U-\widehat{Q}_P \mu^p_X+\widehat{Q}_P \widehat{Q}_X \mu^y_X+\widehat{Q}_X \mu^u_U-\widehat{Q}_X \mu^x_X-\widehat{Q}_X\mu^y_Y-\widehat{Q}_Y \mu^y_X+\mu^p_Y,\\
d\widehat{Q}_Y&=\sum \widehat{Q}_{Yi}\oi+\o^q \widehat{Q}_U-\widehat{Q}_P \mu^p_Y+\widehat{Q}_P \widehat{Q}_Y \mu^y_X+\widehat{Q}_Y\mu^u_U-\widehat{Q}_X \mu^x_Y-2 \widehat{Q}_Y \mu^y_Y+\mu^q_Y,\\
d\widehat{Q}_U&=\sum \widehat{Q}_{Ui}\oi-\widehat{Q}_P \mu^p_U+\widehat{Q}_P \widehat{Q}_U\mu^y_X-\widehat{Q}_X \mu^x_U-\widehat{Q}_U \mu^y_Y-\widehat{Q}_Y \mu^y_U+\mu^q_U,\\
d\widehat{Q}_P&=\sum \widehat{Q}_{Pi}\oi+ \mu^x_X+\widehat{Q}_P^2\mu^y_X-\widehat{Q}_P \mu^y_Y-\mu^x_Y,
\endaligned
\]
where, in the sums above, $i$ runs through the lifted horizontal base coordinates $X, Y, U, P$. We normalize $\widehat{Q}_i=0$ for $i=X, Y, U, P$ to obtain
\[
\aligned
\mu^p_Y&=-\sum \widehat{Q}_{Xi}\oi,\\
\mu^q_Y&=-\sum \widehat{Q}_{Yi}\oi,\\
\mu^q_U&=-\sum \widehat{Q}_{Ui}\oi,\\
\mu^x_Y&=-\sum \widehat{Q}_{Pi}\oi.
\endaligned
\]
Also, the recurrence formula $0=d\widehat{Q}=\sum \widehat{Q}_i\oi+\mu^u_Y$ now becomes $\mu^u_Y=0$. 

The next step in the equivalence procedure is to compute the structure equations $d\oi$ on our equivariant moving frame $\Bt_{1,1}$. Doing so we encounter as structure coefficients the lifted second order invariants $\widehat{Q}_{ij}$. We manage to normalize all of them except for $\widehat{Q}_{PP}$ to zero. At this point, there are $r^{(2)}=13$ second order group parameters we have \emph{not} solved for. These 13 parameters along with $X_x, X_u, Y_x, Y_y, Y_u, U_u, P_u, P_x$ and the second order jets $j^2q\restrict{(x,y,u,p)}$ parametrize the equivariant moving frame $\Bt_{2,2}$.

Meanwhile,
\[
\widehat{Q}_{PP}=\frac{g_ {11}^2 q_{pp}}{U_u(g_{22}-g_{21} q_p)}
\]
and so this equivalence problem branches at this point, depending on whether $q_{pp}=0$ or not. 

\subsubsection{Branch 1}
When $q_{pp}=0$ the original PDE has the form
\[
u_y=g(x, y, u)u_x+h(x, y,u). 
\]
On this branch $\widehat{Q}_{PP}=0$ and there are no more normalizable structure coefficients in $d\oi$. We must then check for involutivity of the $\oi$. We have the structure equations
\[
\aligned
d\o^x&= -\o^x\wedge\mu^x_X-\o^u\wedge\mu^x_U,\\
d\o^u&= -\o^x\wedge\mu^y_X-\o^u\wedge\mu^y_Y-\o^u\wedge\mu^y_U,\\
d\o^u&= \o^x\wedge\o^p-\o^u\wedge\mu^u_U,\\
d\o^p&= -\o^x\wedge\mu^p_X-\o^u\wedge\mu^p_U-\o^p\wedge\mu^u_U+\o^p\wedge\mu^x_X.
\endaligned
\]
To perform Cartan's test for involutivity we compute the reduced Cartan characters $s^{(1)}_1, s^{(1)}_2, s^{(1)}_3, s^{(1)}_4$ for the above system. The first Cartan character $s^{(1)}_1$ is the maximum rank of the matrix
\[
\left(
\begin{array}{cccccccc}
 -a & 0 & 0 & -c & 0 & 0 & 0 & 0 \\
 0 & -a & -b & 0 & -c & 0 & 0 & 0 \\
 0 & 0 & 0 & 0 & 0 & -c & 0 & 0 \\
 d & 0 & 0 & 0 & 0 & -d & -a & -c \\
\end{array}
\right)
\]
where $a, b, c, d$ are arbitrary real numbers. This maximum rank is achieved for example when $a=c=1$ and $b=d=0$ and is $s^{(1)}_1=4$. Some simple computations reveal that $s^{(1)}_2=3$, $s^{(1)}_3=1$ and $s^{(1)}_4=0$. Now, Cartan's test for involution asks whether
\[
13=r^{(2)}=\sum_i i\cdot s^{(1)}_i=1\cdot 4+2\cdot 3+3\cdot1+4\cdot0=13.
\]
Since this is true, the frame $\{\o^x,\o^u,\o^u,\o^p\}$ is involutive and \emph{all} first order PDE of the form
\[
u_y=g(x, y, u)u_x+h(x, y,u)
\]
are equivalent under a point transformation and the general equivalence map depends on one arbitrary function of three variables, as per Cartan's theory of involution. A canonical form of a PDE in this class is the trivial $u_y=0$.

\subsubsection{Branch 2}
When $q_{pp}\neq0$ we normalize $U_u$ to obtain
\[
U_u= \frac{g_ {11}^2 q_{pp}}{g_{22}-g_{21} q_p}
\]
and the recurrence formula gives
\[
\mu^u_U=\sum \widehat{Q}_{PPi}\oi+2 \mu^x_X-\mu^y_Y.
\]
Going back to the structure equations $d\oi$ on $\Bt_{1,2}$ and studying the structure coefficients, we find that 
\[
d\widehat{Q}_{PPP}=\sum \widehat{Q}_{PPPi}\oi-\mu^x_X \widehat{Q}_{PPP}+\mu^y_Y \widehat{Q}_{PPP}+3 \mu^x_U+3 \mu^y_X
\]
and that we can normalize $\widehat{Q}_{PPP}=0$ and solve for $X_u$. The explicit formula for this normalization is 
\[
X_u= \frac{g_{11} g_{21} q_{pp}}{g_{21} q_p-g_{22}}-\frac{g_{11} \
q_{ppp}}{3 q_{pp}}.
\]
At the next stage, computing $d\oi$ on $\Bt_{1,3}$ we find the structure function $\widehat{Q}_{PPPP}$ and the recurrence formula gives
\[
d\widehat{Q}_{PPPP}=\sum \widehat{Q}_{PPPPi}\oi+\frac{2\widehat{Q}_{PPPP}}{3} (\mu^x_X -\mu^y_Y)-2 \mu^y_U.
\]
Therefor $Y_u$ can be normalized from $\widehat{Q}_{PPPP}=0$. The formula we obtain is
\[
Y_u= \frac{-9 g_ {21}^2 q_ {pp}^4+6 g_{21} q_{ppp} q_ {pp}^2 (g_{21} \
q_p-g_{22})-3 q_{pppp} q_{pp} (g_{22}-g_{21} q_p)^2+4 q_ {ppp}^2 \
(g_{22}-g_{21} q_p)^2}{18 q_ {pp}^3 (g_{22}-g_{21} q_p)}.
\]
At the next stage, computing $d\oi$ on $\Bt_{1,4}$, we find $\widehat{Q}_{5P}$ and the recurrence formula tells us that 
\[
d\widehat{Q}_{5P}=\sum \widehat{Q}_{5P,i}\oi-3\widehat{Q}_{5P}(\mu^x_X-\mu^y_Y)
\]
and some further computations reveal that 
\[
\widehat{Q}_{PPPPP}=-\frac{(40 q_{ppp}^3-45 q_{pp} q_{pppp} q_{ppp}+9 q_{pp}^2 q_{ppppp}) (g_{22}-g_{21} q_p)^3}{54 g_{11}^3 q_{pp}^6}.
\]
This branch of the equivalence problem therefore branches again at this juncture depending on the vanishing or non-vanishing of the expression
\[
40 q_{ppp}^3-45 q_{pp} q_{pppp} q_{ppp}+9 q_{pp}^2 q_{ppppp}.
\]

In the case this expression is identically zero, for example for the PDE
\[
u_y=u_x^2,
\]
then, after we finish normalizing all possible structure coefficients from $d\oi$, we find that we have normalized all second order group parameters except for the $r^{(2)}=4$ parameters $P_{uu}, P_{ux}, X_{ux}$ and $P_{xx}$. We again check for involution. This time we have the structure equations
\[
\aligned
d\o^x&=-\o^x\wedge\mu^x_X-\o^u\wedge\o^p+\o^u\wedge\mu^y_X,\\
d\o^y&=-\o^x\wedge\mu^y_X-\o^u\wedge\mu^y_Y,\\
d\o^u&=\o^x\wedge\o^p-2 \o^u\wedge\mu^x_X+\o^u\wedge\mu^y_Y,\\
d\o^p&=-\o^x\wedge\mu^p_X-\o^u\wedge\mu^p_U-\o^p\wedge\mu^x_X+\o^p\wedge\mu^y_Y.
\endaligned
\] 
This leads to reduced Cartan characters $4, 1, 0, 0$ and Cartans test gives
\[
r^{(2)}=4<1\cdot4+2\cdot1=6
\]
and so the coframe of $\oi$'s is not involutive. Prolonging to the next order we compute the structure equations $d\oi$ \emph{and} the exterior derivative of the not-yet normalized first order Maurer-Cartan forms $\mu^p_U,\mu^p_X,\mu^x_X,\mu^y_X,\mu^y_Y$. Normalizing all the lifted invariants appearing in this set of equations (notice however that the equation for $d\oi$ are unchanged) we normalize many third order group parameters and no new second order group parameters. After all is said and done we find that the ones we did not normalize are the $r^{(3)}=4$ third order group parameters
\[
P_{uuu}, P_{uux}, P_{uxx}, P_{xxx},
\] 
while the structure equations are
\[
\aligned
d\mu^p_U&=\o^x\wedge\mu^p_{UX}+\o^u\wedge\mu^p_{UU}+\o^p\wedge\mu^x_{UX}+\mu^p_U\wedge\mu^x_X-\mu^p_X\wedge\mu^y_X,\\
d\mu^p_X&=\o^x\wedge\mu^p_{XX}+\o^u\wedge\mu^p_{UX}+\o^q\wedge\mu^x_{UX}+\o^p\wedge\mu^p_U+\mu^p_X\wedge\mu^y_Y,\\
d\mu^x_X&=\o^u\wedge\mu^x_{UX}+\o^x\wedge\mu^p_U-\o^u\wedge\mu^p_X,\\
d\mu^y_X&=-\o^x\wedge\mu^x_{UX}+\o^u\wedge\mu^p_U-\mu^x_X\wedge\mu^y_X-\mu^y_X\wedge\mu^y_Y,\\
d\mu^y_Y&=\o^x\wedge\mu^p_U-2\o^u\wedge\mu^p_X-\o^p\wedge\mu^y_X.
\endaligned
\]
This time the reduced Cartan characters are $4,0,0,0,0$ and Cartan's test for involution is satisfied: $r^{(3)}=4=1\cdot4 +0\cdot2+0\cdot3+0\cdot4+0\cdot5$. We say that the equivariant moving frame $\Bt_{2,5}$ we have computed is \emph{involutive}. Notice that \emph{all} PDEs
\[
u_y=f(x,y,u,p)
\]
where $q_{pp}\neq0$ and
\[
40 q_{ppp}^3-45 q_{pp} q_{pppp} q_{ppp}+9 q_{pp}^2 q_{ppppp}=0
\]
are point-equivalent to $u_y=u_x^2$. 

We stop here, but mention that for PDE such that neither $\widehat{Q}_{PP}$ nor $\widehat{Q}_{PPPP}$ vanish, we continue by normalizing $\widehat{Q}_{PPPP}=1$ and solving for $g_{11}$, recompute structure equations and so on.

\begin{section}{The Lie-Tresse theorem}\label{sec:LT}

When constructing a (partial) moving frame for a quasi-horizontal pseudo-group $\H$ of order $r$ there are two possible outcomes as one navigates the different branches that emerge. The first possibility is that at some stage the Maurer-Cartan forms $\mu^a_A$, $|A|<p$, $r\leq p$, on our partial moving frame $\B_{p,q}\to\S_q$ are involutive, \cite{IMF}. In this case, usually all sections whose jets lie within the domain of definition $\S_q$ are locally congruent as there are rarely any invariants present (although there are exceptions, \cite[p. 370]{O-1995}). 

Alternatively one manages to normalize all pseudo-group parameters of some order $p$. The moving frame machinery is then continued where we investigate the structure functions of the coframe in a search for further normalizations of group parameters. On some branches these structure functions are all constant and no more normalizations are possible. In these cases, the sections of that branch are all equivalent and have a finite dimensional Lie group of symmetries. 

In this section we look at the case when we can normalize all pseudo-group parameters of order $\leq p$ to obtain some partial moving frame $\Bt_{p,q}\to\S_q$. Let $\S_\infty:=(\pi^\infty_q)^{-1}(\S_q)$. A differential invariant is a smooth $\H$-invariant function on $\S_\infty$. The \emph{Lie-Tresse theorem}, or \emph{Fundamental basis theorem}, first proved by Lie in the finite dimensional case, \cite[p.760]{L-1893}, and extended by Tresse to include pseudo-groups, \cite{T-1894}, states that the algebra (over $\R$) generated by all such differential invariants is \emph{finitely generated} in the following sense. There exist differential invariants $I_1, \ldots, I_t$ and invariant differential operators $\D_1, \ldots, \D_n$ such that \emph{all} differential invariants can be written as smooth functions of the $I_i$ and their derivatives w.r.t $\D_j$. In \cite{OP-2009} the authors gave a constructive, practical proof of this theorem, which nevertheless relied upon some complicated results on non-standard algebraic objects like ``eventual submodules'' and required the computation of a Groebner basis for a certain module. In this section we give a straightforward and simple proof that is more practical than the one in \cite{OP-2009} since once we have completed an equivalence problem, we may simply read a generating set of invariants off the final structure equations.   

\begin{Rem}
In the Appendix we show that freeness (i.e. all pseudo-group parameters can be normalized) of a Lie pseudo-group implies quasi-horizontality, and so we are necessarily in the quasi-horizontal case without having restricted the generality of our Lie-Tresse theorem below.
\end{Rem}

Again, let $\Bt_{p,q}\to\S_q$ be a moving frame where we have normalized all pseudo-group parameters of order $\leq p$ so $\Bt_{p,q}$ is actually diffeomorphic to its domain of definition $\S_q\subset J^q(\E)$. For reasons that become clear in a moment, assume $p=r$ (we can do this since all pseudo-group parameters can be normalized by persistence of freeness, see Appendix). 


Let us assume, for simplicity that $\H$ acts transitively on $\E$ and so we can normalize all zero order lifted invariants $Z^a$. Then, modulo contact forms on $\S_q$, the Maurer-Cartan forms $\mu^a_A$, $|A|<r$, corresponding to group parameters, once restricted to $B_{r, q}$, become linear combinations
\[
\mu^a_A\mapsto \sum_{l=1}^nI^a_{A;l}\o^l
\]
of the horizontal forms $\o^l$, where $I^a_{A;l}$ are differential invariants on $\S_q$. Note that $1\leq l\leq n$. The forms $\o^l$, $1\leq l\leq n$, are a basis of contact invariant horizontal forms in $J^\infty(\E)$. Let us denote their dual differential operators by $\D_l$. Each $\o^l$ ($\D_l$) is a linear combination of the $dx^l$ ($D_{x^l}$) with coefficients that are functions on $\S_q$. 

The equivalence problem for sections $S_1$ and $S_2$ of $\E$ (with jets $j^qS_j$ lying in $\S_q$) reduces, according to Theorem \ref{thm:total}, to the equivalence of the $\mu^a_A$, $|A|<r$, as pulled back to the spaces $\B^{j^qS_1}_{r}$ and $\B^{j^qS_2}_{r}$, but both are diffeomorphic to open sets in the $n$ dimensional base space $\X$. Let us  describe this reduced equivalence problem on $\X$. Denote the pull-back of $\o^i$ to $\B^{j^qS_\epsilon}_{r}$ by $\widetilde{\o}_\epsilon^i$ for $\epsilon=1,2$, and that of the invariants $I^a_{A; i}$ by $I^a_{\epsilon; A; l}$, but notice that $I^a_{\epsilon; A; l}$ is simply $I^a_{A; i}\comp j^qS_\epsilon$. A local map $f:\X\to\X$ preserves all the pulled-back Maurer-Cartan forms $\mu^a_A$, $|A|<r$, if and only if 
\[
f^*\widetilde{\o}_2^i=\widetilde{\o}_1^i\quad \text{and} \quad f^*I^a_{2; A; l}=I^a_{1; A; l}.
\]
{\'E}lie Cartan solved this \emph{(extended) coframe equivalence problem} and we briefly describe his solution, see \cite{O-1995} for more. Write the structure functions
\[
d\o^i=J^i_{j,k}\o^j\wedge\o^k,\quad 1\leq i, j, k\leq n,
\]
for certain differential invariants defined on $(\pi^{q+1}_q)^{-1}\left(\S_q\right)$. Actually, the $J^i_{j,k}$ are themselves functions of some of the $I$'s since the Maurer-Cartan structure equations on $\D$, (\ref{eq:Dstr}) give
\[
d\o^i=-d\mu^i=-\sum_b \o^b\wedge\mu^i_b
\]
and so any invariant $J^i_{jk}$ appearing above will have come from the restriction of $\mu^i_b$ to the moving frame and can be found among the $I^a_{A;l}$. Denote the dual differential operators to the coframes $\owt^i_\epsilon$ by $\widetilde{\D}^\epsilon_i$. Note that $\widetilde{\D}^\epsilon_i$ is obtained by evaluating the coefficients of $\D_i$ at $j^qS_\epsilon$.  The \emph{invariants} of the extended coframe $\{\owt_\epsilon^i, I^a_{\epsilon; A; l}\}$ are the $I^a_{\epsilon; A; l}$ and all their derivatives w.r.t. the $\widetilde{\D}^\epsilon_i$. Cartan proved that an equivalence map $f$ exists if and only if $f$ pulls the (infinite) collection of invariants $\{(\widetilde{\D}^2)^KI^a_{2; A; l}\}_{K\in \mathbb{Z}_{\geq0}^n, |A|<r}$ to $\{(\widetilde{\D}^1)^KI^a_{1; A; l}\}_{K\in \mathbb{Z}_{\geq0}^n, |A|<r}$. There is a finite process for determining when this happens which has to do with finding a finite subset of the invariants that has maximal rank (again, see \cite{O-1995}). 

%

Let us denote the collection of invariants and their derivatives by
\[
\C:=\{(\widetilde{\D})^KI^a_{A; l}\}_{K\in \mathbb{Z}_{\geq0}^n, |A|<r}.
\]
Note that the invariants in $\C$ are defined on $\S_\infty$. Before proceeding to the Lie-Tresse theorem we arrange the functions from $\C$ in a map
\beq\label{cfuncs}
c_\infty:=(I^a_{A; l},\ldots, \D^KI^a_{A; l},\ldots)_{|K|\leq \infty,~ 1\leq a\leq n+m,~ 1\leq l\leq n,~ |A|<r},
\eeq
where $c_\infty:J^{\infty}(\E)\to \R^{\infty}$. We shall prove that the fibers of the map $c_\infty$ are precisely the orbits of $\H$ on $\S_\infty$ so that any function, constant on these orbits, must be a function of $c_\infty$. Since $c_\infty$ contains only the invariants $\D^KI^a_{A; l}$ this will prove the Lie-Tresse theorem. 

%
%
%

\begin{theorem}[Lie-Tresse]\label{thm:LieTresse}
Let $\Bt_{r,q}\to\S_q$ be an equivariant moving frame for a quasi-horizontal pseudo-group $\H$ of order $r$ where we have normalized all the pseudo-group parameters of orders $\leq r$. The invariants $$\{I^a_{A; l}\}_{1\leq l\leq n,~ 1\leq a\leq n+m,~ |A|<r}$$ generate the differential invariant algebra of $\H$ on $(\pi^\infty_q)^{-1}(\S_q)$.
\end{theorem}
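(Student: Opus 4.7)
The plan is to show that the fibers of the map $c_\infty$ from (\ref{cfuncs}) coincide with the $\H$-orbits on $\S_\infty$. Granted this, any smooth $\H$-invariant function $F$ on $\S_\infty$ is constant on fibers of $c_\infty$, so a standard local functional-dependence argument (select a maximal functionally independent subfamily of the $\D^K I^a_{A;l}$ near a regular point and apply the implicit function theorem) expresses $F$ locally as a smooth function of finitely many $\D^K I^a_{A;l}$. Since every component of $c_\infty$ is by construction a $\D$-derivative of some $I^a_{A;l}$, this is precisely the asserted finite generation of the differential invariant algebra under the invariant derivations.

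One inclusion is immediate: every function in $\C$ is $\H$-invariant. Each $I^a_{A;l}$ arises from restricting a lifted (hence right-regularized invariant) Maurer--Cartan form to the cross-section $\Bt_{r,q}$, which is the defining recipe for a differential invariant, and the operators $\D_l$ preserve invariance. Thus orbits of $\H$ on $\S_\infty$ are contained in fibers of $c_\infty$.

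For the reverse inclusion, suppose $j^\infty S_1\restrict{x}$ and $j^\infty S_2\restrict{y}$ lie in the same fiber of $c_\infty$ and extend them to local sections $S_1,S_2$ of $\E$ with jets in $\S_q$. Producing a $\p\in\H$ with $\p\comp S_1 = S_2$ reduces, via part 3 of Theorem \ref{thm:total} (applicable since $\H$ is quasi-horizontal by the Appendix, hence vertically integrable), to finding a local diffeomorphism $f:\X\to\X$ with $f(x)=y$ and sections $s_j$ of $\B^{j^qS_j}_r\to \X$ satisfying $f^* s_2^* \Sb_2^* \mu^a_A = s_1^* \Sb_1^* \mu^a_A$ for $|A|<r$. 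On $\Bt_{r,q}$, modulo contact forms, $\mu^a_A$ pulls back to $\sum_l I^a_{A;l}\,\omega^l$, so this system is exactly Cartan's extended coframe equivalence problem for the coframes $\{\widetilde{\omega}^i_\epsilon, I^a_{\epsilon;A;l}\}$ on $\X$, where $I^a_{\epsilon;A;l} = I^a_{A;l}\comp j^qS_\epsilon$. By Cartan's classical solution, such an $f$ exists precisely when the full collection of coframe invariants $\{(\widetilde{\D}^\epsilon)^K I^a_{\epsilon;A;l}\}$ is pulled back correctly by $f$, and this is guaranteed by the hypothesis $c_\infty(j^\infty S_1\restrict{x}) = c_\infty(j^\infty S_2\restrict{y})$. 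Finally, Theorem \ref{thm:quastotal} integrates $f$ to the desired $\p\in\H$.

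The principal subtlety is verifying that Cartan's iterative invariant-generation procedure on $\{\widetilde{\omega}^i, I^a_{A;l}\}$ produces no invariants beyond those already packaged into $\C$. Here the key observation, already flagged before the statement of the theorem, is that the structure functions $J^i_{j,k}$ appearing in $d\omega^i = J^i_{j,k}\,\omega^j\wedge\omega^k$ arise, via the Maurer--Cartan equation (\ref{eq:Dstr}), from restricting the forms $\mu^i_b$ to the moving frame, so they are themselves among the $I^a_{A;l}$; all further invariants generated by differentiation in Cartan's algorithm are then iterated $\D_l$-derivatives of the $I^a_{A;l}$, i.e. members of $\C$. With this bookkeeping settled, the fiber--orbit equality follows, and the functional-dependence argument completes the proof.
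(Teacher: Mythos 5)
Your overall strategy matches the paper's: prove that the fibers of $c_\infty$ coincide with the $\H$-orbits on $\S_\infty$, reduce the nontrivial inclusion to Cartan's extended coframe equivalence problem on $\X$ via part 3 of Theorem \ref{thm:total}, and observe that the structure functions $J^i_{j,k}$ and everything Cartan's algorithm generates already lie in $\C$. The forward inclusion and the closing functional-dependence step are likewise as in the paper (which states the latter more briefly).

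There is, however, a genuine gap where you assert that the existence of the equivalence map $f$ ``is guaranteed by the hypothesis $c_\infty(j^\infty S_1\restrict{x})=c_\infty(j^\infty S_2\restrict{y})$.'' Cartan's solution to the coframe equivalence problem is not a pointwise criterion: it requires that a finite collection of classifying \emph{functions} $R_t\comp j^\infty S_1$ and $R_t\comp j^\infty S_2$ have overlapping images on neighborhoods of $x$ and $y$ (equivalently, that the classifying manifolds of the two extended coframes locally coincide), whereas your hypothesis only gives equality of the \emph{values} of all invariants in $\C$ at the two points. Bridging that difference is precisely where the paper's proof does its real work: it picks $n$ invariants $A_1,\ldots,A_n\in\C$ with $d_HA_1\wedge\cdots\wedge d_HA_n\neq0$ at the jets in question, uses them as local coordinates, notes that the Taylor coefficients of any $R\in\C$ in these coordinates are the values of the derivatives $D_A^JR$, which again belong to $\C$ and hence agree at the two points, and then invokes real-analyticity to conclude that $R\comp j^\infty S_1$ and $R\comp j^\infty S_2$ overlap near $x$ and $y$. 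Without this power-series argument (and without analyticity) pointwise agreement of all the $\D^KI^a_{A;l}$ does not yield the overlap that Cartan's theorem demands, so the step as you state it does not go through; inserting that argument closes the gap and makes your proof essentially identical to the paper's.
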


\begin{proof}
The proof is based on the fact that the invariants $$\{I^a_{A; l}\}_{1\leq l\leq n,~ 1\leq a\leq n+m,~ |A|<r}$$ determine the local congruence problem for sections in $\E$. 

Let $S_1$ and $S_2$ be two sections of $\E$ with infinte jets $j^\infty S_j\restrict{x_j}=z_j^\infty$ at some points $x_j$, $j=1,2$, and suppose that all invariants of the collection $\mathcal{C}$ agree at the $z_j^\infty$, i.e.
\beq\label{IJ}
R\restrict{z_1^\infty}=R\restrict{z_2^\infty},\quad \forall R\in\C.
\eeq
(Equivalently, $c_\infty(z_1^\infty)=c_\infty(z_2^\infty)$.)

First we show that (\ref{IJ}) implies that for each invariant $R\in\C$, $R\comp j^\infty S_1$ and $R\comp j^\infty S_2$ \emph{overlap} (have the same image set) on some neighborhoods of $x_1$ and $x_2$. (Note that, for simplicity, we write $R\comp j^\infty S_j$ even though $R$ only depends on the jets of $S_j$ up to some finite order.)

Pick $n$ invariants $A_1, \ldots, A_n\in \C$ such that $$d_HA_1\wedge\dots\wedge d_HA_n\restrict{z^\infty_j}\neq 0$$ where $d_H$ is the horizontal part of the exterior derivative on $J^\infty(\E)$. (Note that we may have to restrict the $z_j^\infty$ to a dense subset of $\S_\infty$ in order to make sure this is possible.)

The (invariant) derivative with respect to $A_i$, $D_{A_i}$, is a linear combination of the $\D_j$ with coefficients depending on $\D_kA_l$. For a multi-index $J=(J_1, \ldots, J_n)\in\mathbb{Z}_{\geq 0}^n$, we shall write $D_A^J$ for $D^{J_1}_{A_1}\dots D^{J_n}_{A_n}$. Therefore, by (\ref{IJ}), 
\[
(D_A^JR)\restrict{z_1^\infty}=(D_A^JR)\restrict{z_2^\infty}
\]
for all $R\in\C$ and $J\in \mathbb{Z}_{\geq 0}^n$.

Now take some invariant $R\in \C$ and consider the two maps $R_j:=R\comp j^\infty S_j$ after the change of variables $x\mapsto (A_1, \ldots, A_n)\comp j^\infty S_j\resx$. Note that $a_0:=(A_1, \ldots, A_n)\comp j^\infty S_1\restrict{x_1}=(A_1, \ldots, A_n)\comp j^\infty S_2\restrict{x_2}$. These maps have power series expansions around $a_0$ given by
\[
R_j=\sum\frac{1}{J!}(D_A^JR)\restrict{z^\infty_j}(a-a_0)^J.
\]

Therefore the $R_j$'s have the same power series coefficients and, everything being real-analytic, there must be open neighborhoods $V^R_j$ of $x_j$ such that 
\beq\label{eq:I1I2}
R_1(V^R_1)=R_2(V^R_2).
\eeq 

The next step in the proof is proving that $z_1^\infty$ and $z_2^\infty$ such that (\ref{IJ}) holds are congruent under $\H$. This actually follows quite easily from what we have shown so far. Recall that local congruence of $S_1$ and $S_2$, mapping $x_1$ to $x_2$ (and therefore $z_1^\infty$ to $z_2^\infty$) is equivalent to a local equivalence map (again, mapping $x_1\mapsto x_2$) between the extended coframes $\{ \owt^i_1, I^a_{1;A;l}\}$ and $\{ \owt^i_2, I^a_{2;A;l}\}$. Following Cartan's solution, there is a finite set of invariants, $R_1, \ldots, R_k\in \C$ such that there exists a local equivalence map between the extended coframes if and only if there are open neighborhoods $V_j$ of $x_j$, $j=1,2$, such that $R_t\comp j^\infty S_1(V_1)=R_t\comp j^\infty S_2(V_2)$ for all $t=1,\ldots, k$. Well, according to (\ref{eq:I1I2}) we can take
\[
V_j:=\bigcap_{t=1}^kV^{R_t}_j,\quad j=1,2,
\]
proving that $z_1^\infty$ and $z_2^\infty$ are congruent.

We have shown that the the fibers of the invariant map $c_\infty$ and the orbits of $\H$ on $J^\infty(\E)$ are the same thing. This completes the theorem. 

%
%
%

\end{proof}

\subsection{Upper bounds on the size of the generating set}

Theorem \ref{thm:LieTresse} provides an upper bound on the number of generating invariants. We have that 
\beq\label{genset}
\{I^a_{A; l}\}_{1\leq l\leq n,~ 1\leq a\leq n+m,~ |A|<r}
\eeq
is a set of generators, where
\[
\mu^a_A\mapsto I^a_{A; l}\o^l.
\]
Since those $\mu^a_A$ corresponding to principal derivatives in the determining equations for $\H$ are constant-coefficient linear combinations of parametric ones, we can focus only on the latter. Therefore, to count the number of $I$'s we simply have to count the number of parametric derivatives of order $<r$ in $\H$ and multiply by $n$. The former number is equal to the dimension of the groupoid $\H_{r-1}$. So a general upper bound for the \emph{minimal number of generating invariants} is
\[
n\cdot \left(\text{dim}\H_{r-1}\right).
\]
In the case that $\H$ is a finite dimensional Lie group its determining equations are maximally over-determined and so the dimension of the Lie group is equal to $\text{dim}\H_{r-1}$. We therefore have, for a Lie group $G$ acting freely on $\S_q$ the upper bound
\[
n\cdot \left(\text{dim}G\right).
\]
\begin{Rem}
This upper bound, in the Lie group case, has also be deduced using the equivariant moving frame, \cite{FO-1999}. There, the members of the generating set (\ref{genset}) were called \emph{Maurer-Cartan invariants}. 
\end{Rem}

Further assuming that $\H$ acts transitively on $\E$ so that normalizing the $X$'s gives $\mu^i=-\o^i$ and so there are $n$ Maurer-Cartan forms that do not provide any invariants we obtain the upper bound
\beq\label{upperb2}
n\cdot \left( \text{dim}\H_{r-1}-n\right)
\eeq
on the number of generating invariants. Similarly, if $\H$ acts transitively on $J^1(\E)$, normalizing all the first order (and zero order) lifted invariants $\Uh^\alpha_i\equiv e^\alpha_i$ to constants $e^\alpha_i$, the recurrence formula gives
\[
0=d\Uh^\alpha=\Uh^\alpha_i\oi+\text{Maurer-Cartan forms}=e^\alpha_i\oi+\text{Maurer-Cartan forms}
\]
By transitivity we can solve these equations for $m$ Maurer-Cartan forms which then become \emph{constant coefficient} combinations of $\o$'s. We can hence subtract a further $n\cdot m$ from our upper bound (\ref{upperb2}). More generally, using the same argument, if $\H_{r}$ acts transitively on $J^{q^*}(\E)$ we have the upper bound
\[
n\cdot \left(\text{dim}\H_{r-1}-\text{dim}J^{q^*-1}(\E)\right).
\]
Now, $\text{dim}J^{q^*-1}=n+m\binom{q^*-1+n}{n}$ and so the above bound is equal to
\beq\label{upperb3}
n\cdot \left(\text{dim}\H_{r-1}-n-m\binom{q^*-1+n}{n}\right).
\eeq
For a free Lie-group action, we replace $ \text{dim}\H_{r-1}$ with the dimension of the Lie group.
%

\begin{Ex}
Consider the Lie group action
\[
(x, u)\mapsto (X, U)= (\lambda x + a, \lambda u + b),\quad \lambda>0.
\]

The defining equations are
\[
X_u=X_{xx}=U_x=U_{uu}=0,\quad U_u=X_x
\]
and this pseudo-group is quasi horizontal of order $2$. By Theorem \ref{thm:LieTresse}, a second order moving frame will provide us with a generating set of invariants. Normalizing $X=U=0$ the recurrence formula gives
\[
\mu^x=-\o^x=-X_xdx,\quad \mu^u=-\widehat{U}_X\o^x
\]
where $\Uh_X=u_x$ is an invariant. Moving up to the next order, we have
\[
du_x=\Uh_{XX}\o^x=\frac{u_{xx}}{X_x}\o^x
\]
and so (assuming $u_{xx}>0$) we normalize $\Uh_{XX}=1$ to obtain
\[
X_x=u_{xx}
\]
and
\[
0=d\Uh_{XX}=\Uh_{XXX}\o^x-\mu^x_X.
\]
Solving for $\mu^x_X$ we find that
\[
\mu^x_X=\frac{u_{xxx}}{u_{xx}^2}\o^x.
\]
By Theorem \ref{thm:LieTresse}, $u_x$ and $\dis \frac{u_{xxx}}{u_{xx}^2}$ generate the invariant differential algebra. Notice that $u_x$ alone is not a generator, since
\[
du_x=\o^x
\]
and so applying the invariant derivative to $u_x$ generates no new invariants. Hence, we have a minimal set of generators of size 2.

Now, the upper bound (\ref{upperb2}) is
\[
1\cdot\left(3-1\right)=2.
\]
Therefore, the upper bound (\ref{upperb2}) is actually optimal in complete generality.
\end{Ex}
%

Although, as the last example showed, the upper bound (\ref{upperb2}) is optimal it is far from robust; in most cases it will overshoot the actual size of a minimal generating set by quite a lot. 

\begin{Ex}
Consider the Lie-group action
\[
(x, u)\mapsto (x+a, u+p(x))
\]
where $x, a\in\R^n$, $u\in\R^m$, and $p(x)$ is an arbitrary polynomial of total degree at most $d$. This Lie group has dimension 
\[
n+m\binom{n+d}{n}
\]
and acts transitively of $J^d(\E)$. The upper bound (\ref{upperb3}) is 
\beq\label{dmax}
n\cdot\left(n+m\binom{n+d}{n}-n-m\binom{n+d-1}{n}\right)=nm\binom{n+d-1}{n-1}.
\eeq
It is easily seen that a \emph{minimal} set of generating invariants for this Lie group action is $u^\alpha_J$ where $J\in\mathbb{Z}_{\geq0}^n$ with $|J|=d+1$. This minimal generating set has 
\[
m\binom{n+d}{d+1}=m\binom{n+d}{n-1}
\] 
elements. Hence, the upperbound (\ref{dmax}) is just the simplistic upper bound on the number of monomials in $n$ variables of degree $d+1$ obtained by taking the number of monomials of degree $d$ and multiplying by $n$. 
\end{Ex}

\end{section}

\newpage

\section*{Appendix: Eventually free Lie pseudo-groups}

As previously mentioned, the foundational work, \cite{OP-2008, OP-2009}, on the equivariant moving frame for pseudo-groups focused on pseudo-group actions that were \emph{eventually free}, meaning that at some order, the groupoid action, see (\ref{eq:hatw}), of $\H_q$ on some subset $\S_q\subset J^q(\E)$ was free. Equivalently, each group parameter of order $\leq q$ can be normalized in the lifted invariants $\UhaJ$, $|J|\leq q$, restricted to $\S_q$. The key result in \cite{OP-2009}, dubbed \emph{persistence of freeness}, says that if the action is free at order $q$ in $\S_q$, it is also free at order $q+1$ on the pre-image $(\pi^{q+1}_q)^{-1}(\S_q)$ where $\pi^{q+1}_q:J^{q+1}(\E)\to J^q(\E)$ is the canonical projection. In this section we prove that eventually free actions are quasi-horizontal and that persistence of freeness is a trivial consequence thereof. We used persistence of freeness briefly leading up to the Lie-Tresse theorem above.


As a quick warm up for the general case, consider a Lie pseudo-group acting on graphs of maps $x\mapsto u(x)$, $x\in\R$, in $\R^2$. The total derivative on $J^\infty(\R^2)$ is
\[
D_x=\pder{x}+\sum u_{k+1}\pder{u_k},
\]
where $u_k$ is the $k\th$ $x$-derivative of u. The first prolongation of a map $\p(z)=\p(x,u)=(X,U)=Z$ is
\[
\Uh_X=\frac{D_xU}{D_xX}=\frac{U_x+u_xU_u}{X_x+u_xX_u},
\]
but notice that the nominator and denominator are the $\dis\frac{\partial}{\partial y}$ derivatives of $U$ and $X$ in flat coordinates for the graph of $x\mapsto u(x)$ since, on the graph of $u$,
\[
\dis\frac{\partial}{\partial y}=\dis\frac{\partial}{\partial x}+u_x\dis\frac{\partial}{\partial u},
\]
(cf. (\ref{eq:dy})). 

It is easily seen that in general, on the graph of $x\mapsto u(x)$, we have that
\beq\label{eq:dy}
\frac{d^k}{dy^k}Z=D_x^kZ.
\eeq
Defining
\[
D_{X}=\frac{1}{D_xX}D_x,
\]
the higher order prolongations of $\p$ are
\[
\Uh_k=D_{X}U,
\]
which are functions of the $y$-derivatives of $\p$ only. In arbitrary many variables, $n+m$, for $\E=\X\times\U$, the formulas for $\Uh_J$, restricted to a graph of $x\mapsto u(x)$, become rather complicated but they too are functions of the expressions
\[
\frac{\partial^{|K|}}{\partial y^K}Z^a=D^KZ^a,\quad K\in\mathbb{Z}_{\geq0}^n,
\]
where
\[
\frac{\partial}{\partial y^i}=\frac{\partial}{\partial x^i}+u\av_i\frac{\partial}{\partial u\av}.
\]

 
Now assume the action of $\H$ on $\E$ is free at order $q$ in $\V_q\subset J^q(\E)$ and $j^qu\resx \in\V_q$. Let $x\mapsto u(x)$ be a representing map with $q$-jet $j^qu\resx$ at $x$ and such that the entire $q$ jet $j^qu$ lies in $\V_q$. Freeness means that it is possible to solve for every group parameter of order $\leq q$ in a set of equations (restricted to $x\mapsto u(x)$) of the form
\beq\label{eq:ucon}
\Uh^{\alpha_i}_{J^i}=c^{\alpha_i}_{J^i}=\text{constant},
\eeq
for $1\leq i\leq \text{dim}\H_q$. We shall prove that $\H$ is necessarily quasi-horizontal at some order $\leq q$. Writing the determining equations in flat coordinates $(y,w)$ w.r.t $x\mapsto u(x)$, we write $\pmb{Y}^{(q)}$ for those pseudo-group jet coordinates $Z^a_K$, $|K|\leq q$, that are pure $y$-derivatives and $\pmb{W}^{(q)}$ for at most $q\th$ order pseudo-group jet coordinates that are not. We consider two sets of equations; the determining equations for $\H$ of order $\leq q$ and (\ref{eq:ucon}), in flat coordinates for $u$. These can be written
\begin{align*}
F(z,Z, \pmb{Y}^{(q)}, \pmb{W}^{(q)})&=0,\\
G(z, Z, \pmb{Y}^{(q)})&=0,
\end{align*}
since (\ref{eq:ucon}) only involve $\pmb{Y}^{(q)}$. The fact that this system can be uniquely solved for \emph{all} $\pmb{Y}^{(q)}$ and $\pmb{W}^{(q)}$ directly implies that 
\[
F(z, Z, \pmb{Y}^{(q)}, \pmb{W}^{(q)})=0
\]
can be solved for $\pmb{W}^{(q)}$. In particular, \emph{each} pseudo-group jet involving a $w$-derivative can be taken as principal. Thus, eventually free Lie pseudo-group actions are quasi-horizontal and they have horizontal order equal to the order of the pseudo-group. In addition, each pseudo-group jet involving a $u^\alpha$-derivative can be taken as principal.

\begin{theorem}\label{thm:efqh}
Let $\H$ be an eventually free Lie pseudo-group, of order $t^*$, acting on the bundle $\E\to\X$. Then $\H$ is quasi-horizontal of horizontal order $t^*$ and all pseudo-group parameters can be taken to be pure $\X$ derivatives.
\end{theorem}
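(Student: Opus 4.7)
The plan is to make rigorous the sketch given in the paragraphs immediately preceding the statement, leveraging the observation that in flat coordinates $(y,w)$ adapted to a section $x \mapsto u(x)$ whose $q$-jet sits in the free locus, the restriction of every lifted invariant $\Uh^\alpha_J$ to the zero section $\{w=0\}$ is a function only of the pure $y$-derivatives $\pmb{Y}^{(q)}$ of the pseudo-group element $Z$. This is the single structural fact that drives everything; the rest is bookkeeping and elementary linear algebra.

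First I would fix such a section, pass to flat coordinates, and split the pseudo-group jet coordinates of order $\leq q$ into $\pmb{Y}^{(q)}$ (pure $y$-derivatives of the $Z^a$) and $\pmb{W}^{(q)}$ (everything involving at least one $w$-derivative). The observation above forces the normalization equations (\ref{eq:ucon}) realizing freeness to take the form $G(z,Z,\pmb{Y}^{(q)}) = 0$, with no $\pmb{W}^{(q)}$-dependence whatsoever, while the determining equations of $\H$ of order $\leq q$ read $F(z,Z,\pmb{Y}^{(q)},\pmb{W}^{(q)}) = 0$.

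Second I would extract the algebraic content. Freeness at order $q$ means the combined system $\{F=0,\,G=0\}$ has a unique solution in all of $(\pmb{Y}^{(q)},\pmb{W}^{(q)})$, so its total Jacobian with respect to these variables is invertible at the solution point. Because $G$ has no $\pmb{W}^{(q)}$-dependence the lower-right block of that Jacobian vanishes, and a one-line null-space argument (any $\pmb{w}$ with $\partial F/\partial \pmb{W}^{(q)} \cdot \pmb{w} = 0$ yields a vector $(0,\pmb{w})$ in the kernel of the full Jacobian) forces $\partial F/\partial \pmb{W}^{(q)}$ to have full column rank. The implicit function theorem then lets every $\pmb{W}^{(q)}$ coordinate be declared principal in $F = 0$, so every parametric pseudo-group jet of order $\leq q$ is a pure $y$-derivative.

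Third I would transport the splitting back to the original $(x,u)$ coordinates via Remark \ref{rem:quasi}, under which ``$w$-derivative'' becomes ``$\ua$-derivative''. This immediately yields the statement that every pseudo-group parameter is a pure $\X$-derivative. Specializing the splitting to order $r = t^*$ supplies the principal-parametric decomposition required by Definition \ref{def:quasi}, so $\H$ is quasi-horizontal at $t^*$; since the definition already forces $r \geq t^*$, the horizontal order is exactly $t^*$. The main obstacle I anticipate is the full-column-rank step in the second paragraph — not because it is deep, but because it is the only place where the hypothesis of freeness is actually consumed, and one needs to verify that the zero block in the combined Jacobian is correctly sized and positioned so that a single null-space computation does the job.
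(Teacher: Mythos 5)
Your proposal follows the paper's own argument essentially verbatim: the same passage to flat coordinates adapted to a representing section, the same splitting of the pseudo-group jets into pure $y$-derivatives $\pmb{Y}^{(q)}$ and the remainder $\pmb{W}^{(q)}$, the same observation that the freeness normalizations $G=0$ involve only $\pmb{Y}^{(q)}$, and the same conclusion that unique solvability of the combined system $\{F=0,\,G=0\}$ forces every $\pmb{W}^{(q)}$ coordinate to be principal in the determining equations. Your block-Jacobian/null-space computation simply makes explicit the step the paper dispatches with ``directly implies,'' so this is the same proof.
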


\begin{Ex}\label{ex:stab}
Consider the Lie group action
\[
(x, u)\mapsto (\lambda x + a, \lambda^{k+1}u + P_k(x)),\quad \lambda>0,
\]
where $P_k(x)$ is an arbitrary polynomial of order $k$. The determining equations for this pseudo group are
\[
X_u=0,~~X_{xx}=0,~~ U_u=X_x^{k+1},~~ \frac{\partial^{k+1}}{\partial x^{k+1}}U=0.
\]
Notice that, since Lie group actions are eventually free almost everywhere, in accordance with the above theorem, \emph{each} pseudo-group jet involving a $u$-derivative can be taken as principal. The order of the pseudo-group is $k+1$.
\end{Ex}

We can now present a rather trivial proof of \emph{persistence of freeness} of Lie pseudo-groups. Let $j^qu\resx\in\V_q\subset J^q(\E)$ lie in a set $\V_q$ on which $\H_q$ acts freely, as above. In flat coordinates w.r.t. a representing map $x\mapsto u(x)$, each pseudo-group jet involving a $w$ derivative can be taken as principal. Let us denote the pseudo-group jet coordinates in $\pmb{Y}^{(q)}$ that have order equal to $q$ by $\pmb{Y}^q$. Then in the determining equations $F^{(q)}(z, Z^{(q)})=0$, ignoring those equations with principal derivatives having a $w$-derivative and those of order less than $q$, we have some subset of $q\th$ order determining equations $\bar{F}^{(q)}(z, \pmb{Y}^{q})=0$ that only involve pure $y$-derivatives of order $q$. Then the $y$-prolongations
\[
\left(\partial_{y_1}\bar{F}^{(q)}\right)(z, \pmb{Y}^{q+1})=0,\ldots, \left(\partial_{y_n}\bar{F}^{(q)}\right)(z, \pmb{Y}^{q+1})=0
\]
give a system $\bar{F}^{(q+1)}(z, \pmb{Y}^{q+1})=0$ of determining equations of order $q+1$ that involve only pseudo-group jets of pure $y$-derivatives.

By freeness, on $\V_q$, there is a collection $\Uh^{\alpha_i}_{J^i}$, $1\leq i\leq (\text{dim}\H_q - \text{dim}\H_{q-1})$, of lifted invariants with $|J^i|=q$, from which all $q\th$ order parametric derivatives can be normalized. That is, the collection $\Uh^{\alpha_i}_{J^i}$, $1\leq i\leq (\text{dim}\H_q - \text{dim}\H_{q-1})$, is full rank in the $q\th$ order parametric derivatives (all of which are pure $y$-derivatives since $\H$ is eventually free). Hence, the set of functions on $\Ht^q$,
\beq\label{eq:fuh}
\left\{\bar{F}^{(q)}(z,  \pmb{Y}^{q}), \Uh^{\alpha_i}_{J^i}\right\}\restrict{1\leq i\leq (\text{dim}\H_q - \text{dim}\H_{q-1})}
\eeq
is full rank in the pseudo-group jets $\pmb{Y}^{q}$ when evaluated at jets in $\V_q$. Now, for any lifted invariant $\UhaJ$, the two collections (once evaluated at $j^qu\resx$)
\beq\label{eq:2col}
\aligned
\{D_{X^i}\UhaJ\}_{1\leq i\leq n},\quad \{\partial_{y_i}\UhaJ\}_{1\leq i\leq n}
\endaligned
\eeq
are invertible linear combinations of each other with coefficients depending on first order jets. Therefore the two collections (\ref{eq:2col}) are equal in rank in the top order pseudo-group jets. Going back to (\ref{eq:fuh}), this means that prolonging all functions in (\ref{eq:fuh}) w.r.t $\partial_{y_i}$ for $1\leq i\leq n$ will give a collection having the same rank in the $\pmb{Y}^{q+1}$ as the system
\beq\label{eq:fuh2}
\left\{\bar{F}^{(q+1)}(z,  \pmb{Y}^{q+1}), \Uh^{\alpha_i}_{J^i, j}\right\}\restrict{1\leq i\leq (\text{dim}\H_q - \text{dim}\H_{q-1}), 1\leq j\leq n}.
\eeq
But since (\ref{eq:fuh}) was full rank in $\pmb{Y}^q$, its prolongation w.r.t. the $y$ variables will be full rank in $\pmb{Y}^{q+1}$. Since (\ref{eq:fuh2}) has the same rank in $\pmb{Y}^{q+1}$ we have proved persistence of freeness.

\newpage






%

\end{document}